\newcommand{\seq}{\textnormal{seq}}
\numberwithin{equation}{section}
\newtheorem{theorem}{\bf Theorem}[section]
\newtheorem{corollary}[theorem]{\bf Corollary}
\newtheorem{lemma}[theorem]{\bf Lemma}
\newtheorem{proposition}[theorem]{\bf Proposition}
\title{Further Properties of Random Threshold Graphs} 
\author{Christopher Ross} 
\begin{document}
\maketitle

\begin{abstract}

\hspace{.2in} 

In 2009, two different groups independently explored the behavior of random threshold graphs.  Here, we extend their techniques to find the distribution of other properties, including matching number, degeneracy, and length of the longest cycle.

\end{abstract}

\everymath{\displaystyle}

\section{Introduction}

An undirected graph $G$ is a threshold graph if there exists some real-valued function $w$ that assigns weights to the vertex set $V(G)$ such that two vertices $u, v$ are adjacent if and only if $w(u)+w(v)$ exceeds some threshold $t$.  These graphs, first defined in 1973 by Chv\'atal and Hammer \cite{Sppatg}, also have several other equivalent characterizations, which led to their occasional ``rediscovery'' through the following two decades.

In 2009, it was independently shown by Reilly and Scheinerman \cite{RTG}, as well as by Diaconis, Holmes, and Janson \cite{TGLaRTG}, that the method of generating random threshold graphs by choosing $n$ vertex weights uniformly on $[0,1]$ (with $t = 1$) was in fact uniform on the set of all $n$-vertex threshold graphs.  The two teams then used this equivalence to find properties ranging from the distribution of the number of isolated vertices to the likelihood of Hamiltonicity.

Here, we take their results and extend them, using the encodable nature of threshold graphs to determine the distributions and likelihoods of other graph invariants.

\section{Basics}

One of the many equivalent characterizations of threshold graphs is that they can be constructed from a single vertex by repeatedly adding an isolated vertex or a dominating vertex \cite{Aoiiip, TGaRT}.  So a threshold graph on $n$ vertices is completely determined by this record of $n-1$ additions; if we mark a $0$ for the addition of an isolated vertex and a $1$ for the addition of a dominating vertex, we get a binary sequence which is known as the \emph{creation sequence}.  (This definition, drawn from \cite{RTG}, is equivalent to the \emph{binary code} defined in \cite{TGLaRTG}.  It differs slightly from the creation sequence definition of \cite{Dtnwgsadp}, which is closer to the \emph{extended binary code} of \cite{TGLaRTG}, as both allocate an extra digit for the original single vertex.)

Given a threshold graph $G$ with $n$ vertices, we let $\seq(G)$ denote the $(n-1)$-digit creation sequence of $G$.  Conversely, given a binary string $s$ of length $n$, $\gamma(s)$ is the unlabeled threshold graph $G$ such that $\seq(G) = s$.  From this, we see that the number of $n$-vertex threshold graphs is exactly $2^{n-1}$.  

These properties suggest two natural methods for random generation of a threshold graph with $n$ vertices.  The first is to choose the $n$ weights independently and uniformly from $[0,1]$ with threshold $t=1$, and let $G$ denote the unlabeled threshold graph induced by the weights; in this model, the probability of any particular edge ${u,v}$ being in the graph is exactly $1/2$.  Alternatively, we can select $G$ uniformly at random from the set of all $2^{n-1}$ possible graphs of given size.  

A critical result in the study of random threshold graphs was the 2009 proof that these two methods have the same distribution.  That is, via independent arguments, collaborations between Diaconis, Holmes, and Janson \cite{TGLaRTG} and between Reilly and Scheinerman \cite{RTG} showed that given an unlabeled threshold graph $G$ with $n$ vertices, the probability of $G$ arising via the random selection of vertex weights drawn independently and uniformly from $[0,1]$, with threshold $t=1$, is equal to $2^{1-n}$.  

As a consequence, when examining random threshold graphs, we can discard the continuous random variables involving vertex weights and restrict ourselves to the discrete creation sequences.  So to compute the probability $P$ of random threshold graph $G$ having a certain invariant, we first find the properties of the creation sequence necessary and sufficient to evoke such behavior in $G$.  Then $P$ will be proportional to the number of $(n-1)$-long binary sequences with such properties, a number that is usually easier to count.

With the exception of that single initial vertex, also known as the \emph{base vertex}, every element of $V(G)$ can be classified according to its digit in $\seq(G)$.  We call the others zero-vertices or one-vertices, depending upon whether the corresponding digit is zero or one, respectively.  Furthermore, we use the creation sequence to refer to specific vertices, saying that a vertex has \emph{index} $i$ if it corresponds to the $i$-th digit in the creation sequence, reading from left to right.  (The base vertex has index $0$.)  Note that this enumeration is a by-product of the structure of the graph, as opposed to an independent labelling.

As a consequence of this construction, the relationship between any two vertices can be completely determined by their corresponding digits and their relative indices.  Since one-vertices dominate all existing vertices at the time of their addition, and zero-vertices are isolated from all existing vertices, two vertices are adjacent if and only if the vertex of higher index has a corresponding digit of $1$.  So no two zero-vertices are adjacent, but all one-vertices are adjacent to each other, as well as to the base vertex.

\begin{proposition} For any two threshold graphs $G$ and $H$, $G$ is an induced subgraph of $H$ if and only if $\seq(G)$ is a subsequence of $\seq(H)$.
\end{proposition}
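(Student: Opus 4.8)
The plan is to reduce everything to the adjacency rule established above — that two vertices with indices $a < b$ (index $0$ being the base vertex) are adjacent precisely when the higher-index vertex has digit $1$ — together with the fact that $\seq$ and $\gamma$ are mutually inverse bijections between length-$(n-1)$ binary strings and $n$-vertex unlabeled threshold graphs. The central device, used in both directions, is the following observation. Let $d_i$ denote the $i$-th digit of $\seq(H)$, and let $S$ be a set of $n$ vertices of $H$, listed in increasing order of their $H$-indices as $u_0, u_1, \ldots, u_{n-1}$ with indices $j_0 < j_1 < \cdots < j_{n-1}$. Set $s' = d_{j_1} d_{j_2} \cdots d_{j_{n-1}}$. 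Then $H[S] \cong \gamma(s')$, which I will verify by defining $\phi$ to send the index-$k$ vertex of $\gamma(s')$ to $u_k$ and checking that $\phi$ preserves and reflects adjacency: in $\gamma(s')$ the index-$a$ and index-$b$ vertices (with $a<b$) are adjacent iff the $b$-th digit of $s'$ is $1$, i.e. iff $d_{j_b}=1$, while in $H[S]$ the vertices $u_a, u_b$ inherit their adjacency from $H$, which by the adjacency rule is again exactly the condition $d_{j_b}=1$ (note $j_b \geq 1$, so $u_b$ always carries a digit, whether or not $u_0$ is the base vertex of $H$).

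For the backward direction, suppose $\seq(G)$ is a subsequence of $\seq(H)$, witnessed by positions $i_1 < \cdots < i_{n-1}$ with $d_{i_k}$ equal to the $k$-th digit of $\seq(G)$. I take $S$ to be the base vertex of $H$ together with the vertices at indices $i_1, \ldots, i_{n-1}$; then in the notation above the minimum-index element of $S$ is the base vertex and the string $s'$ is exactly $\seq(G)$, so the device gives $H[S] \cong \gamma(\seq(G)) = G$, and $G$ is an induced subgraph of $H$.

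For the forward direction, suppose $G$ is an induced subgraph of $H$, realized by a vertex set $S$ with $H[S] \cong G$. Listing $S$ by increasing $H$-index produces the string $s'$ as above, and the device gives $H[S] \cong \gamma(s')$; since $\seq = \gamma^{-1}$ on unlabeled threshold graphs and $H[S] \cong G$, I conclude $\seq(G) = \seq(H[S]) = s'$, which is a subsequence of $\seq(H)$ by construction. The main obstacle is precisely the bookkeeping hidden in the device for this direction: the embedding of $G$ is handed to us only as an unordered vertex subset, so one must argue that re-sorting $S$ by the ambient indices of $H$ reconstructs exactly the creation sequence of $G$, and in particular handle the base vertex correctly — the minimum-index element of $S$ becomes the new base and loses its digit, regardless of whether it is the original base vertex of $H$. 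Confirming that this recovers $\seq(G)$ as a genuine subsequence, rather than merely an isomorphic string, is where the bijectivity of $\seq$ and the adjacency rule must be combined with care.
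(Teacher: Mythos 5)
Your proof is correct. For the direction ``subsequence implies induced subgraph,'' your argument is essentially the paper's: both map the vertices of $G$ into $H$ by matching indices (base vertex to base vertex) and verify adjacency via the digit rule. The two proofs part ways on the converse. The paper reduces to the deletion of a single vertex and argues that removing one vertex of $H$ leaves every remaining vertex's classification (isolated vs.\ dominating at the time of its addition) unchanged, so that the creation sequence of $H$ minus a vertex is $\seq(H)$ with one digit deleted; iterating handles an arbitrary induced subgraph. You instead handle the whole vertex subset $S$ in one step: you read off the string $s'$ of ambient digits of $S$ (minus its minimum-index element, which becomes the new base), show $H[S] \cong \gamma(s')$ directly from the adjacency rule, and then invoke the bijectivity of $\seq$ and $\gamma$ to conclude $\seq(H[S]) = s'$. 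Your route avoids the induction and makes the base-vertex bookkeeping explicit (the paper does not separately address the case where the deleted vertex is the base vertex, which your device absorbs automatically), at the cost of leaning on the fact that an unlabeled threshold graph determines its creation sequence uniquely --- a fact the paper asserts when it counts $2^{n-1}$ threshold graphs, and which the paper's deletion argument also implicitly uses when it identifies the re-formed sequence with $\seq(G)$. Both arguments are sound; yours is the cleaner single pass, the paper's the more elementary step-by-step reduction.
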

\begin{proof}

Letting $\seq(G) = s_1 s_2 \dots s_m$ and $\seq(H) = t_1 t_2 \dots t_n$, suppose that there exist $j_1 < j_2 < \dots < j_m$ such that $\seq(G) = t_{j_1} t_{j_2} \dots t_{j_m}$.  For any $u, v \in V(G)$ of index $i_u$ and $i_v$, let $u', v' \in V(H)$ be those vertices of index $j_{i_u}$ and $j_{i_v}$.  Then as the corresponding digits of $u'$ and $v'$ are equal to, and in the same order as, those of $u$ and $v$, the former are adjacent in $H$ if and only if the latter are adjacent in $G$.  Thus, $G$ must be an induced subgraph of $H$.

For the converse, it suffices to prove the claim for when $G$ is induced by the removal of a single vertex of $H$; repeated application produces all other cases.  Let $G$ be the induced subgraph of $H$ formed by the removal of vertex $v$, and consider the construction of $\seq(G)$.  The removal of $v$ does not change the classification of any vertices: if vertex $u$ is isolated in $H$, it remains isolated in $G$.  Similarly, if $u$ dominated all lower-index vertices in $H$, it dominates all of those vertices that remain in $G$.  So when forming $\seq(G)$, the vertices can be taken in the same order as when forming $\seq(H)$, without changing their corresponding digits. 
\end{proof}

Given a sequence $s = s_1 s_2 \dots s_n$, we let the $k$-th tail of $s$ be the subsequence consisting of the last $k$ digits of $s$: $s_{n-k+1} s_{n-k+2} \dots s_n$.  In this vein, we define $z_k(s)$ and $u_k(s)$ to be the numbers of zeros and ones in the $k$-th tail, respectively.

We define a function $h$ on the set of all finite binary sequences by, for any such sequence $s$,
$$ h(s) = \max_{ 0 \leq k \leq |s| } \{ z_k(s) - u_k(s) \} $$
So $h(s)$ is the maximum count, across all tails of $s$, by which the number of zeros exceeds the number of ones.  Note that $h$ is always non-negative, as the case $k=0$ corresponds to the empty tail, in which there are no digits of either type.

To make full use of $h$, we must first find its distribution.

\begin{proposition} For a random threshold graph $G$ on $n$ vertices,
$$ P(h(\seq(G)) = k ) = \left( \frac{1}{2} \right)^{n-1} \binom{ n-1 }{ \left\lfloor \frac{n+k}{2} \right\rfloor }  $$
\end{proposition}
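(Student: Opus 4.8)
The plan is to reduce the claim to a classical fact about the maximum of a simple symmetric random walk. First I would invoke the 2009 equivalence recalled above: since a random threshold graph on $n$ vertices has a creation sequence that is uniformly distributed over all $2^{n-1}$ binary strings of length $n-1$, the random variable $h(\seq(G))$ has the same law as $h(s)$ for $s$ drawn uniformly from the set of length-$(n-1)$ binary strings. Thus $P(h(\seq(G)) = k)$ equals $2^{1-n}$ times the number of such strings $s$ with $h(s) = k$, and the problem becomes purely combinatorial.

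Next I would reinterpret $h$ as the running maximum of a lattice path. Reading the digits of $s$ from right to left and mapping a $0$ to a step of $+1$ and a $1$ to a step of $-1$, the partial sum after $k$ steps is precisely $z_k(s) - u_k(s)$, since extending the tail by one digit appends exactly one step. Writing $W_0 = 0, W_1, \dots, W_{N}$ for these partial sums with $N = n-1$, we have $h(s) = \max_{0 \le k \le N} W_k$. Under the uniform measure on strings this is exactly the maximum $M_N$ of a simple symmetric random walk of $N$ steps started at the origin, so it suffices to compute $P(M_N = k)$.

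The core step is the reflection principle. For $r \ge 1$, reflecting each path that reaches height $r$ about the line $y = r$ after its first visit to $r$ sets up a bijection between paths that reach $r$ yet end below it and paths that end above it; this yields the identity $P(M_N \ge r) = P(S_N \ge r) + P(S_N \ge r+1)$, where $S_N = W_N$ is the endpoint. Telescoping $P(M_N = k) = P(M_N \ge k) - P(M_N \ge k+1)$ then collapses to $P(M_N = k) = P(S_N = k) + P(S_N = k+1)$. Because $S_N \equiv N \pmod 2$, exactly one of these two endpoint probabilities is nonzero, and it equals $2^{-N}\binom{N}{\lceil (N+k)/2 \rceil}$.

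Finally I would substitute $N = n-1$ and use $\lceil (n-1+k)/2 \rceil = \lfloor (n+k)/2 \rfloor$ to recover the stated expression. The main obstacle is executing the reflection argument cleanly: one must correctly account for the paths whose endpoint is exactly $r$, track the parity constraint that forces one of the two binomial terms to vanish, and verify the boundary case $k = 0$ separately, since there $M_N \ge 0$ holds automatically and the telescoping identity must be checked by hand. It is precisely this bookkeeping that pins down the floor in the final formula.
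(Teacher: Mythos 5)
Your proposal is correct and takes essentially the same route as the paper: both read the creation sequence right-to-left, convert $h$ into the running maximum of a lattice walk, and evaluate the count by the reflection principle. You merely carry out explicitly (via $P(M_N \geq r) = P(S_N \geq r) + P(S_N \geq r+1)$, telescoping, and the parity check) the ``touch but do not cross'' staircase-walk enumeration that the paper quotes without detail, arriving at the same binomial coefficient.
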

\begin{proof}
Using the uniformity of the distribution, we see that the probability of $h(\seq(G)) = k$ is proportional to the number of $(n-1)$-long binary sequences that have a tail with $k$ more zeros than ones, but where no tail has $k+1$ more zeros than ones.  

To count the number of such sequences, we read the creation sequences from right to left, and interpret the digits as moves within an integer lattice.  Starting at the origin, we move a single unit upwards whenever we encounter a zero, and rightwards whenever we encounter a one.  In this framework, a tail with $m+k$ zeros and $m$ ones produces a ``staircase walk'' from the origin to the point $(m,m+k)$.  

So the number of $(n-1)$-long sequences $s$ such that $h(s) = k$ equals the number of staircase walks that touch, but do not cross, the line $y = x + k$.  Thus,
$$ P(h(\seq(G)) = k) = \left( \frac{1}{2} \right)^{n-1} \binom{ n-1 }{ n-1+k - \left\lfloor \frac{n+k-1}{2} \right\rfloor } $$
\end{proof}

\section{Planarity}

By Kuratowski's Theorem, $G$ is planar if and only if it does not contain a subgraph that is a subdivision of $K_5$ or $K_{3,3}$.  As such, we will use the following result which shows exactly when $G$ has $K_5$ as a subgraph:

\begin{lemma}[Reilly, Scheinerman]\label{ThreshClique} For a threshold graph $G$, the size of the maximum clique is one more than the number of one-vertices.
\end{lemma}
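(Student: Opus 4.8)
The plan is to pin down the maximum clique size by a matching pair of bounds, relying throughout on the adjacency rule already established: two vertices are adjacent precisely when the one of higher index carries the digit $1$.

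For the lower bound, I would exhibit an explicit clique. Writing $k$ for the number of one-vertices, observe that any two one-vertices are adjacent, since whichever has the higher index carries digit $1$. Each one-vertex is likewise adjacent to the base vertex, because the one-vertex must have the higher index (the base vertex has index $0$) and carries digit $1$. Hence the base vertex together with all $k$ one-vertices constitutes a clique of size $k+1$, establishing that the maximum clique is at least this large.

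For the matching upper bound, the crux is to show that every vertex which is \emph{not} a one-vertex --- that is, the base vertex together with all the zero-vertices --- forms an independent set. Given any two such vertices, the one of higher index cannot be the base vertex (whose index is $0$), so it must be a zero-vertex and hence carries digit $0$; the two are therefore non-adjacent. Consequently any clique meets this independent set in at most one vertex, so the clique's remaining members lie among the $k$ one-vertices, and its total size is at most $k+1$.

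Combining the two bounds yields the stated equality. I expect no serious obstacle here; the only point meriting care is the treatment of the base vertex, which carries no digit of its own yet, being always the lower-index vertex in any pair, behaves exactly as a zero-vertex would under the adjacency rule.
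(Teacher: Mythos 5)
Your proof is correct: the paper states this lemma without proof (it is quoted from Reilly and Scheinerman), and your two-bound argument uses exactly the adjacency rule the paper establishes in its Basics section, namely that the base vertex and the zero-vertices form an independent set while the one-vertices together with the base vertex form a clique. Your closing remark about the base vertex behaving like a zero-vertex in the upper bound is the one point that needed care, and you handled it correctly.
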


\begin{proposition}\label{ThreshPlanar} A threshold graph $G$ with $s = \seq(G)$ is planar if and only if $s$ contains no subsequence of the form $1111$ or $00111$.  
\end{proposition}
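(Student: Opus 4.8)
The plan is to prove the two directions separately, treating the forward implication (that a forbidden subsequence forces non-planarity) as the easy one and the converse as the substance of the argument.

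For the forward direction, I would first identify the two small threshold graphs $\gamma(1111)$ and $\gamma(00111)$ explicitly. Reading the creation sequence $1111$ gives a base vertex together with four one-vertices, each adjacent to all lower-index vertices and to every other one-vertex, so $\gamma(1111) = K_5$. Reading $00111$ gives a base vertex and two zero-vertices, which together form an independent set, along with three one-vertices, which form a triangle and are each adjacent to all three earlier vertices; the complete bipartite graph between $\{\text{base, first zero, second zero}\}$ and the three one-vertices shows that $\gamma(00111)$ contains $K_{3,3}$. Then, if $\seq(G)$ contains $1111$ or $00111$ as a subsequence, the Proposition relating induced subgraphs to subsequences makes the corresponding graph an induced subgraph of $G$, so $G$ contains $K_5$ or $K_{3,3}$ and is non-planar by Kuratowski's Theorem.

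For the converse I would argue the contrapositive, first translating the avoidance condition into a structural one. Avoiding $1111$ as a subsequence is exactly the condition of having at most three one-vertices, since any four ones in order yield the subsequence; equivalently, by Lemma \ref{ThreshClique}, the maximum clique has size at most four. Assuming exactly three ones, avoiding $00111$ reduces to forbidding two zeros before the first one-vertex, since the three ones of the pattern are forced to be all of the ones present. The key observation is that the vertices adjacent to all three one-vertices are precisely the base vertex together with the zero-vertices preceding the first one-vertex; the base vertex always contributes one such vertex, so forbidding two early zeros caps this set at size two, which is exactly what is needed to prevent a $K_{3,3}$ sitting on the three ones.

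With this in hand I would finish by exhibiting an explicit planar embedding, which avoids having to rule out all Kuratowski subdivisions by hand. Each non-one vertex is adjacent to a suffix of the one-vertices (those of higher index), so the vertices sort into four types: at most two adjacent to all three ones, any number adjacent to the last two, any number adjacent to the last one only, and any number isolated. I would draw the triangle on the three ones, place the (at most two) fully-adjacent vertices one inside and one outside the triangle, stack the two-neighbor vertices as a book along the shared edge, and hang the single-neighbor and isolated vertices as pendants and free points; the cases of fewer than three ones are strictly simpler specializations. The main obstacle is this converse, and within it the bookkeeping that the base vertex already occupies one of the two available ``all-three'' slots, so that the $00111$ bound is tight; once the four vertex types are separated, the embedding itself is routine.
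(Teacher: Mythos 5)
Your proof is correct, but the converse direction takes a genuinely different route from the paper. For the forward direction both arguments are essentially the same: the paper invokes Lemma \ref{ThreshClique} to get $K_5$ from four ones and argues directly that $00111$ yields three one-vertices each adjacent to the base vertex and two zero-vertices, hence a $K_{3,3}$; your version routes this through the subsequence/induced-subgraph proposition by identifying $\gamma(1111)$ and $\gamma(00111)$ explicitly, which is the same idea in different packaging. For the converse, the paper does \emph{not} construct an embedding: it counts vertices of degree at least $3$ (and at least $4$) in each of the two structural cases and observes there are too few to serve as branch vertices of any subdivision of $K_{3,3}$ (which needs six of degree $\ge 3$) or $K_5$ (which needs five of degree $\ge 4$), then concludes by Kuratowski. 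Your approach instead classifies the non-one vertices by which suffix of the one-vertices they see and exhibits an explicit planar drawing (triangle on the ones, the at most two fully-adjacent vertices placed inside and outside, a book of triangles along the shared edge, pendants and isolated points). Your route buys a constructive certificate of planarity and sidesteps subdivisions entirely; the paper's route is shorter and avoids having to verify a drawing. One small imprecision in yours: the case of exactly two ones is not literally a ``specialization'' of the three-ones picture, since there is then no bound on the number of vertices adjacent to both ones ($00111$ imposes nothing when only two ones are present); the graph there is a book of triangles along the single edge between the two one-vertices plus pendants and isolated vertices, which is still plainly planar, so this is a wording issue rather than a gap.
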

\begin{proof}
We shall show that the existence of Kuratowski's offending subgraphs in $G$ is equivalent to the existence of the above subsequences in $s$.

First, suppose that $s$ contains a subsequence of the form $1111$.  Then by Lemma \ref{ThreshClique}, $G$ contains $K_5$ as an induced subgraph.  Alternatively, if $s$ contains some $00111$, then there must exist three one-vertices that are each adjacent to three other vertices: the vertices corresponding to the two zeros and the base vertex.  Thus, $G$ contains $K_{3,3}$ as a subgraph.  In either of these cases, we see that $G$ is non-planar.

Inversely, suppose that $s$ contains no such subsequences, which leads to two subcases: either there are at most two ones in $s$, or there exist exactly three ones, one of which has index at most two.  In the former event, there are at most two vertices of degree exceeding two.  Since subdividing does not increase the degree of existing vertices, no subdivision of any subgraph can have three vertices of degree three or more, eliminating $K_5$ and $K_{3,3}$ as possibilities.  

Similarly, if $G$ contains exactly three one-vertices, one of which has index at most two, then said one-vertex has degree at most four.  There are at most five vertices of degree three or more: the three one-vertices, and any other vertices of lesser index, of which there are at most two.  Thus, no subgraph subdivides into $K_{3,3}$.  And as the base vertex and the zero-vertices can have degree at most three, a subdivision into $K_5$ is also impossible.  Thus, $G$ must be planar.
\end{proof}

\begin{theorem} For a random threshold graph $G$ with $n$ vertices, 
$$ P( G \textnormal{ is planar}) = \left\{ \begin{array}{cc} 1 & n \leq 4 \\ 
\frac{ 3n^2 - 13n + 20 }{ 2^n } & n \geq 4 \end{array} \right. $$
\end{theorem}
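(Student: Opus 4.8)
The plan is to combine the characterization of Proposition~\ref{ThreshPlanar} with the uniformity of the distribution. Since each of the $2^{n-1}$ creation sequences is equally likely, $P(G \textnormal{ is planar})$ equals $2^{1-n}$ times the number $N$ of binary strings of length $n-1$ that contain neither $1111$ nor $00111$ as a subsequence. The entire problem thus reduces to counting such strings, and the stated formula will follow once I show $N = \frac{1}{2}(3n^2 - 13n + 20)$.

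First I would unpack the two forbidden patterns. Avoiding $1111$ as a subsequence is exactly the condition that the string carries at most three ones. When there are zero, one, or two ones, the pattern $00111$ (which needs three ones) cannot occur, so every such string is automatically valid; these contribute $\binom{n-1}{0} + \binom{n-1}{1} + \binom{n-1}{2}$ strings. The only delicate case is that of exactly three ones: here $00111$ appears as a subsequence precisely when two zeros precede the leftmost one, so avoidance is equivalent to having at most one zero before that one, i.e.\ the leftmost one occupies position $1$ or $2$. I would then count these by the position of the leftmost one: if it is in position $1$, the remaining two ones lie among the other $n-2$ slots, giving $\binom{n-2}{2}$; if it is in position $2$ (forcing a zero in position $1$), the remaining two ones lie among the last $n-3$ slots, giving $\binom{n-3}{2}$.

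Summing the contributions yields
$$ N = \binom{n-1}{0} + \binom{n-1}{1} + \binom{n-1}{2} + \binom{n-2}{2} + \binom{n-3}{2}, $$
and a routine simplification collapses this to $\frac{1}{2}(3n^2 - 13n + 20)$; dividing by $2^{n-1}$ produces the claimed expression. The main obstacle is the bookkeeping in the three-ones case: one must be careful to translate ``two zeros before the first one'' into position counts that are simultaneously disjoint and exhaustive, so that no string is missed or double-counted. Finally, for the small range $n \le 4$ I would argue separately that a graph on at most four vertices cannot contain $K_5$ or $K_{3,3}$, nor any subdivision thereof, and so is always planar, giving probability $1$; one then checks that the polynomial piece agrees with this value at the crossover point $n = 4$.
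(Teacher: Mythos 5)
Your proof is correct and takes essentially the same approach as the paper: both reduce the problem via Proposition~\ref{ThreshPlanar} to counting $(n-1)$-bit strings with at most two ones plus those with exactly three ones whose leftmost one sits in position $1$ or $2$, yielding the same five binomial terms $\binom{n-1}{0}+\binom{n-1}{1}+\binom{n-1}{2}+\binom{n-2}{2}+\binom{n-3}{2}$. Your additional remarks on the $n \le 4$ case and the agreement of the two branches at $n=4$ are a harmless (and correct) supplement to what the paper leaves implicit.
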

\begin{proof}
Letting $s = \seq(G) = s_1 s_2 \dots s_{n-1}$, we see that by Proposition \ref{ThreshPlanar}, $G$ is planar if and only if $s$ contains at most two ones, or $s$ has three ones, one of which must be $s_1$ or $s_2$.  By counting the number of such sequences, the probability of the former event is
$$ \left( \frac{1}{2} \right)^{n-1} \left[ \binom{n-1}{0} + \binom{n-1}{1} + \binom{n-1}{2} \right] $$

For the latter event, we further subdivide into the disjoint events $\{s_1 = 1\}$ and $\{s_1 = 0, s_2 = 1\}$, which have a combined probability of
$$ \left( \frac{1}{2} \right)^{n-1} \left[ \binom{n-2}{2} + \binom{n-3}{2} \right] $$
\end{proof}

\section{Matching Number}

Reilly and Scheinerman found the probability of a random threshold graph having a perfect matching.  Here, we explore the distribution of the matching number $\nu(G)$, the number of edges in a maximum matching.

\begin{lemma}[Reilly, Scheinerman]\label{RSPerfMatch} A threshold graph $G$ with an even number of vertices contains a perfect matching if and only if $h(\seq(G)) = 0$.
\end{lemma}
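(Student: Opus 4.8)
The plan is to characterize perfect matchings through Hall's condition applied to the independent set of zero-vertices, exploiting the rigid adjacency structure of the creation sequence. Throughout I would use two structural facts noted earlier: the zero-vertices form an independent set, and each zero-vertex of index $i$ is adjacent to precisely those one-vertices of index greater than $i$ (it has no neighbors of lower index, including the base vertex). Dually, the base vertex together with all one-vertices forms a clique.

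For the forward direction I would prove the contrapositive: if $h(\seq(G)) = k > 0$, then $G$ has no perfect matching. Choose a tail achieving the maximum, so that it contains $u$ ones and $u+k$ zeros, and let $S$ be the set of zero-vertices in that tail. Since the tail consists of the highest-index vertices, every neighbor of a vertex in $S$ is a one-vertex lying in the same tail; hence $|N(S)| \leq u < u+k = |S|$. Because $S$ is independent, in any matching each saturated vertex of $S$ must be matched into $N(S)$, so at least $|S| - |N(S)| \geq 1$ vertices of $S$ remain unmatched, and no perfect matching exists.

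For the reverse direction, suppose $h(\seq(G)) = 0$. I would first match every zero-vertex to a distinct higher-index one-vertex; the feasibility of this is a bipartite matching question, and verifying Hall's condition here is the crux of the whole argument. The key observation is that, because a zero-vertex of index $i$ sees all one-vertices above it, the neighborhood of any set $T$ of zero-vertices depends only on its minimum index $i_0$, equaling the set of one-vertices of index exceeding $i_0$. The binding constraints therefore arise by taking $T$ to be all zero-vertices of index at least $i_0$, and for such $T$ the inequality $|N(T)| \geq |T|$ says exactly that the tail beginning at index $i_0$ has at least as many ones as zeros. This is guaranteed by $h(\seq(G)) = 0$, so Hall's condition holds and the saturating matching exists.

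It remains to match the leftover vertices, namely the base vertex together with the one-vertices not yet used. These all lie in a common clique, so it suffices to check that their number is even. Writing $Z$ and $U$ for the numbers of zero- and one-vertices, the saturating matching consumes $Z$ one-vertices, leaving $U - Z + 1$ vertices; since $n = 1 + U + Z$ is even, $U + Z$ is odd and hence $U - Z + 1$ is even. A clique of even order has a perfect matching, and combining the two matchings yields a perfect matching of $G$. I expect the main obstacle to be the Hall-condition verification in the reverse direction, specifically the reduction from arbitrary vertex subsets to tails via the minimum-index observation, whereas the parity and clique bookkeeping is routine.
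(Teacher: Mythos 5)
The paper states this lemma without proof, importing it from Reilly and Scheinerman's paper \cite{RTG}, so there is no in-paper argument to compare against; your proposal must stand on its own, and it does. Both directions check out. The contrapositive of the forward direction is a clean Hall-type violation: if some tail has $u$ ones and $u+k$ zeros with $k>0$, the $u+k$ zero-vertices of that tail form an independent set whose neighborhood (one-vertices of strictly higher index, hence also in the tail) has size at most $u$, so any matching leaves at least $k$ of them unsaturated. For the converse, your reduction of Hall's condition to tails is valid because the neighborhoods $N(v)$ of zero-vertices are nested by index, so for any set $T$ of zero-vertices with minimum index $i_0$ one has $N(T)=N(T')$ and $|T|\leq|T'|$ where $T'$ is the full set of zero-vertices of index at least $i_0$; the resulting inequality is exactly $u_m(s)\geq z_m(s)$ for the tail of length $m=n-i_0$, which $h(\seq(G))=0$ supplies. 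The parity bookkeeping is also right: $h=0$ applied to the full tail gives $U\geq Z$, and $n=1+U+Z$ even forces $U-Z+1$ to be even, so the leftover clique (base vertex plus unused one-vertices) has a perfect matching. One could quibble that you invoke Hall's theorem where a direct greedy construction (match each zero-vertex, scanning from the right, to the nearest unused higher-index one-vertex) would be more elementary and more in the spirit of the paper's other arguments, but that is a matter of taste, not correctness.
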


\begin{corollary} A threshold graph $G$ with an odd number of vertices has a near-perfect matching if $h(\seq(G)) = 0$.
\end{corollary}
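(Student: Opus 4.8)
The plan is to reduce the odd case to the even case already settled by Lemma \ref{RSPerfMatch}, by attaching a single dominating vertex. Suppose $G$ has an odd number $n$ of vertices and $s = \seq(G) = s_1 s_2 \dots s_{n-1}$ satisfies $h(s) = 0$. I would form the threshold graph $G'$ on $n+1$ vertices whose creation sequence is $s' = s_1 s_2 \dots s_{n-1} 1$; equivalently, $G'$ is obtained from $G$ by adjoining one new dominating vertex $v$ of top index. Since $n$ is odd, $G'$ has an even number of vertices, placing it within the scope of the lemma.

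The key computation is to verify that appending a one preserves the value $h = 0$. Every nonempty tail of $s'$ is a tail of $s$ followed by the trailing one, so for $k \geq 1$ we have $z_k(s') - u_k(s') = [z_{k-1}(s) - u_{k-1}(s)] - 1 \leq h(s) - 1 = -1$, while the empty tail ($k = 0$) contributes $0$. Hence $h(s') = 0$. This is precisely why I append a one rather than a zero: appending a zero would create a length-one tail with $z_1 - u_1 = 1$, forcing $h(s') = 1$ and violating the hypothesis of Lemma \ref{RSPerfMatch}. Applying that lemma to $G'$, I conclude that $G'$ has a perfect matching $M'$.

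To finish, I would delete the added vertex. In $M'$ the vertex $v$ is matched to a unique vertex $w$; removing the edge $vw$ leaves a matching $M = M' \setminus \{vw\}$, none of whose edges is incident to $v$. Because $G$ is the induced subgraph $G' - v$ (its creation sequence $s$ is a subsequence of $s'$, so by the earlier subsequence proposition $G$ is induced in $G'$), every edge of $M$ survives in $G$, and $M$ covers all vertices of $G$ except $w$. Thus $M$ has size $(n+1)/2 - 1 = (n-1)/2$ in a graph on $n$ vertices, i.e. it is a near-perfect matching of $G$.

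I expect the only genuine subtlety to be the choice of which digit to adjoin together with the short tail computation confirming that it keeps $h = 0$; once the even threshold supergraph $G'$ is in hand, the remainder is the routine observation that discarding a perfect matching's single edge at the extra vertex yields a near-perfect matching of the original graph.
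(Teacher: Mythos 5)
Your argument is correct, but it runs in the opposite direction from the paper's. You enlarge $G$ to an even-order supergraph $G'$ by appending a dominating vertex (a trailing $1$ in the creation sequence), check that this preserves $h=0$, invoke Lemma \ref{RSPerfMatch} to get a perfect matching of $G'$, and then discard the one edge at the new vertex. The paper instead shrinks $G$: it deletes the first digit of $\seq(G)$, observes that every tail of the truncated sequence is already a tail of $\seq(G)$ (so $h=0$ is preserved with no computation at all), applies the lemma to the resulting even-order \emph{induced subgraph}, and notes that its perfect matching is verbatim a near-perfect matching of $G$. Both routes lean on the same lemma and the same idea of preserving $h=0$ while changing parity; the paper's version is slightly leaner because a matching of an induced subgraph transfers to $G$ with nothing to delete, whereas yours needs the extra step of removing the edge $vw$ and the (correctly executed) tail computation showing that appending a $1$ does not raise $h$. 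Your observation about why one must append a $1$ rather than a $0$ is the right sanity check for your direction, and the whole argument is sound.
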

\begin{proof}
Letting $\seq(G) = s_1 s_2 \dots s_k$, define subsequence $s'$ by $s' = s_2 s_3 \dots s_k$.  Then $h(s') = 0$, as every tail of $s'$ is also a tail of $\seq(G)$.  Since $\gamma(s')$ is a threshold graph with one fewer vertex than $G$, it has a perfect matching, which is also a matching in $G$.
\end{proof}

\begin{proposition} For a threshold graph $G$ with $n$ vertices, 
$$ \nu(G) = \left\lfloor \frac{ n - h( \seq(G) ) }{ 2 } \right\rfloor $$
\end{proposition}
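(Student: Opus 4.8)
The plan is to prove the identity by bounding $\nu(G)$ from both sides, with the number of unmatched vertices $n - 2\nu(G)$ (the \emph{deficiency}) as the organizing quantity; note that this deficiency always shares the parity of $n$, so the smallest value it can take subject to being at least $h = h(\seq(G))$ is exactly $n - 2\lfloor (n-h)/2\rfloor$. Writing $s = \seq(G)$ and abbreviating $z_k = z_k(s)$, $u_k = u_k(s)$, I would show that the deficiency is at least $h$, and separately exhibit a matching whose deficiency attains that parity-forced minimum.

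For the upper bound on $\nu(G)$, fix $k$ and consider the set $T_k$ of the $k$ highest-index non-base vertices, namely those of index $n-k, \dots, n-1$. A zero-vertex in $T_k$ of index $b$ is adjacent only to one-vertices of index exceeding $b$, and every such index lies in $\{n-k, \dots, n-1\}$; hence all its neighbors are among the $u_k$ one-vertices of $T_k$. Therefore, in any matching, the matched zero-vertices of $T_k$ inject into the one-vertices of $T_k$, so at least $z_k - u_k$ of them go unmatched. Maximizing over $k$ forces the deficiency to be at least $h$, and with the parity constraint this gives $\nu(G) \le \lfloor (n-h)/2\rfloor$.

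For the matching lower bound, the idea is to delete $h$ zero-vertices in order to drive $h$ down to $0$, and then invoke Lemma \ref{RSPerfMatch} together with the Corollary following it. The crucial claim is that one well-chosen deletion lowers $h$ by exactly one. Reading $s$ from right to left as a lattice walk with step $+1$ for each zero and $-1$ for each one, $h$ is the maximum value $m$ attained by the partial sums $f(j) = z_j - u_j$; the smallest index $j^*$ with $f(j^*) = m$ must be reached by an upward step (since $f(j^*) = m > f(j^*-1)$), so it corresponds to a zero-vertex. Deleting that zero-vertex decreases $f(j)$ by one for every $j \ge j^*$ while leaving the earlier values, all at most $m-1$, unchanged, so the new maximum is precisely $m-1$. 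Iterating $h$ times removes $h$ zero-vertices and yields a threshold graph $G'$ on $n-h$ vertices with $h(\seq(G')) = 0$; since deleting vertices produces an induced subgraph, $G'$ is induced in $G$. By Lemma \ref{RSPerfMatch} when $n-h$ is even, and by the ensuing Corollary when $n-h$ is odd, $G'$ has a matching of size $\lfloor (n-h)/2\rfloor$, which is also a matching in $G$, giving $\nu(G) \ge \lfloor (n-h)/2\rfloor$.

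Combining the two bounds proves the proposition. The main obstacle is the deletion step of the lower bound: one must confirm that the greedy choice of the highest-index tail attaining the maximum always lands on a zero-vertex and that removing it lowers $h$ by exactly one rather than by more, which is precisely what the lattice-walk analysis above is designed to secure.
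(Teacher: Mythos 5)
Your proof is correct and follows essentially the same route as the paper: the deficiency bound $n - 2\nu(G) \geq h(\seq(G))$ obtained from the tail with excess zeros, and a matching of size $\lfloor (n-h)/2 \rfloor$ obtained by deleting $h$ zeros so that $h$ drops to $0$ and then applying Lemma \ref{RSPerfMatch}. The only difference is cosmetic---the paper deletes the $h$ right-most zeros at once where you delete one carefully chosen zero at a time---and your version is if anything more careful in justifying that $h$ decreases to exactly $0$ and in invoking the near-perfect-matching corollary when $n-h$ is odd.
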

\begin{proof}

The case $h(\seq(G)) = 0$ having already been handled, we assume $h(\seq(G)) \geq 1$.  Let $s = \seq(G)$.

As $2 \nu(G)$ is the maximum number of vertices in a matching, $n - 2 \nu(G)$ is the minimum across all matchings of the number of unpaired vertices.  Let $m$ denote a maximizing index for $h(s)$, so that $h(s) = z_m(s) - u_m(s)$.  Then there are $h(s)$ more zeros than ones amongst the final $m$ digits.  As zero-vertices are adjacent only to one-vertices of higher index, those vertices corresponding to zeros in the tail can only be adjacent to one-vertices in the same tail, so there are at least $h(s)$ vertices that cannot participate in any matching.  Thus, $n - 2 \nu(G) \geq h(s)$.

Next, let us define a new binary sequence $s'$ by removing the $h(s)$ right-most zeros from $s$.  Then $h(s') = 0$, so by Lemma \ref{RSPerfMatch} the threshold graph $\gamma(s')$, which has $n - h(s)$ vertices, has a matching of size $\left\lfloor (n - h(s)) / 2 \right\rfloor$.  And as $\gamma(s')$ is an induced subgraph of $G$, $\nu(G) \geq \nu(\gamma(s))$.
\end{proof}

Having determined the properties of the creation sequence responsible for a matching number of given size, we can compute its likelihood.

\begin{theorem} For a random threshold graph $G$ with $n$ vertices,
$$ P( \nu(G) = k ) = \left\{ \begin{array}{cc} \left( \frac{1}{2} \right)^{n-1} \binom{n}{k} & k < \frac{n}{2} \\ \left( \frac{1}{2} \right)^{n-1} \binom{n-1}{\left\lfloor \frac{n-1}{2} \right\rfloor } & k = \frac{n}{2} \end{array} \right. $$
\end{theorem}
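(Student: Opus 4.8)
The plan is to reduce this to the two results already in hand: the preceding proposition, which gives $\nu(G) = \left\lfloor (n - h)/2 \right\rfloor$ where I write $h = h(\seq(G))$, and the earlier proposition computing the distribution of $h$, namely $P(h = j) = \left(\frac{1}{2}\right)^{n-1}\binom{n-1}{\lfloor (n+j)/2 \rfloor}$. Since $\nu$ is a deterministic function of $h$, I would first determine, for a fixed target $k$, exactly which values of $h$ satisfy $\left\lfloor (n-h)/2 \right\rfloor = k$, and then sum the corresponding point probabilities.

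Solving $\left\lfloor (n-h)/2 \right\rfloor = k$ amounts to $2k \le n - h < 2k+2$, so that $h \in \{\, n - 2k - 1,\ n - 2k \,\}$. The one subtlety is that $h$ is always non-negative, so the value $h = n - 2k - 1$ is admissible only when it is at least $0$; this is precisely the dichotomy between $k < n/2$ and $k = n/2$ in the statement.

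For the generic case $k < n/2$, both candidate values are non-negative, so $P(\nu = k) = P(h = n-2k) + P(h = n - 2k - 1)$. Here I would evaluate the two floors, $\left\lfloor (n + (n-2k))/2 \right\rfloor = n - k$ and $\left\lfloor (n + (n-2k-1))/2 \right\rfloor = n - k - 1$, so that substituting into the distribution of $h$ turns the sum into $\left(\frac{1}{2}\right)^{n-1}\left[\binom{n-1}{n-k} + \binom{n-1}{n-k-1}\right]$. Pascal's identity then collapses this to $\left(\frac{1}{2}\right)^{n-1}\binom{n}{n-k} = \left(\frac{1}{2}\right)^{n-1}\binom{n}{k}$, as desired.

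For the boundary case $k = n/2$ (which forces $n$ even, and is the largest attainable matching number), the value $h = n - 2k - 1 = -1$ is inadmissible, so only $h = 0$ contributes and $P(\nu = n/2) = P(h = 0) = \left(\frac{1}{2}\right)^{n-1}\binom{n-1}{\lfloor n/2 \rfloor}$. Since $n$ is even, the symmetry of the binomial coefficient gives $\binom{n-1}{n/2} = \binom{n-1}{(n-1)-n/2} = \binom{n-1}{n/2 - 1} = \binom{n-1}{\lfloor (n-1)/2 \rfloor}$, matching the claimed form. I expect the only real care to lie in this boundary bookkeeping — correctly excluding the negative value of $h$ and reconciling $\lfloor n/2 \rfloor$ with $\lfloor (n-1)/2 \rfloor$ — while the generic case is a one-line application of Pascal's rule.
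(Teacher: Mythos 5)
Your proposal is correct and follows essentially the same route as the paper: both reduce the event $\nu(G)=k$ to $h(\seq(G))\in\{n-2k-1,\,n-2k\}$ via the preceding proposition, sum the two point probabilities from the distribution of $h$, collapse them with Pascal's identity for $k<n/2$, and handle $k=n/2$ by noting that only $h=0$ is admissible. Your extra care in reconciling $\binom{n-1}{\lfloor n/2\rfloor}$ with $\binom{n-1}{\lfloor (n-1)/2\rfloor}$ via symmetry is a detail the paper leaves implicit, but the argument is the same.
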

\begin{proof}
Letting $s$ denote $\seq(G)$, we note that since $h$ can only assume integer values,
\begin{eqnarray*} P( \nu(G) = k ) & = & P(n - 2k - 1 \leq h(s) \leq n - 2k ) \\
& = & P( h(s) = n-2k-1) + P(h(s) = n-2k) \end{eqnarray*}
As $h(s)$ must be non-negative, we see that for $0\leq k < n/2$,
\begin{eqnarray*} P( \nu(G) = k ) & = & \left( \frac{1}{2} \right)^{n-1} \left( \binom{n-1}{k} + \binom{ n-1 }{ k-1 } \right) \\
& = & \left( \frac{1}{2} \right)^{n-1} \binom{n}{k} \end{eqnarray*}

As for $k = n/2$, we see that
$$ P \left( \nu(G) = \frac{n}{2} \right) =  P \left( h(s) = 0 \right) = \left( \frac{1}{2} \right)^{n-1} \binom{ n-1 }{ \left\lfloor \frac{ n}{2}  \right\rfloor } $$ 
\end{proof}

\section{Longest Cycle Length}

For a graph $G$, let $\psi(G)$ denote the length of the longest cycle in $G$.  For a graph $G$ on $n$ vertices, Reilly and Scheinerman found the probability that $\psi(G) = n$, corresponding to the event in which $G$ is Hamiltonian, through the following result:

\begin{lemma}[Reilly, Scheinerman] Let $G$ be a threshold graph with $n \geq 3$ vertices.  Then $G$ is Hamiltonian if and only if $u_k(\seq(G)) > z_k(\seq(G))$ for all $1 \leq k \leq n-1$.
\end{lemma}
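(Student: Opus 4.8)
The plan is to prove the two implications separately: necessity via a standard connectivity obstruction for Hamiltonicity, and sufficiency by an explicit construction of the cycle. Throughout, write $s = \seq(G)$ and recall the split structure of $G$: the one-vertices together with the base vertex form a clique, the zero-vertices form an independent set, and a zero-vertex of index $c$ is adjacent to exactly those one-vertices of index greater than $c$.

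For necessity, suppose $G$ is Hamiltonian and fix $k$ with $1 \le k \le n-1$. I would partition $V(G)$ into the zero-vertices $S$ of the $k$-th tail (so $|S| = z_k$), the one-vertices $T$ of the $k$-th tail (so $|T| = u_k$), and the remaining vertices $R$ of index less than $n-k$, which always include the base vertex, so $R \neq \emptyset$. The set $S$ is independent, and every neighbor of a vertex of $S$ is a one-vertex of strictly higher index, hence lies in $T$; in particular no vertex of $R$ is adjacent to any vertex of $S$. If $u_k = 0$ the vertices of $S$ are isolated, contradicting the connectivity of a Hamiltonian graph on $n \ge 3$ vertices, so $u_k \ge 1$. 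I would then invoke the elementary fact that deleting a nonempty set $W$ from a Hamiltonian graph leaves at most $|W|$ components. Deleting $W = T$ isolates each of the $z_k$ vertices of $S$ and leaves $R$ as one or more further components disjoint from $S$, so $u_k = |T| \ge z_k + 1$, which is the claimed strict inequality.

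For sufficiency, the key observation is that the base vertex behaves like a zero-vertex of index $0$: it is adjacent to every one-vertex and to no zero-vertex. Hence $I^{+} = \{\text{base}\} \cup \{\text{zero-vertices}\}$ is an independent set of size $z+1$, where $z$ is the total number of zeros, while the $u$ one-vertices form a clique. I would build the cycle by arranging the one-vertices in increasing order of index around a cycle and inserting each vertex of $I^{+}$ into a distinct gap between two consecutive one-vertices, chosen so that every inserted vertex is adjacent to both of its flanking one-vertices; subdividing distinct edges in this way keeps the result a single cycle through all $n$ vertices. A vertex $w \in I^{+}$ of index $c$ may be placed in a gap exactly when both endpoints have index exceeding $c$, so the existence of a valid placement is precisely the existence of a system of distinct representatives, which I would establish with Hall's theorem.

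The main work, and the step I expect to be the real obstacle, is verifying Hall's condition and recognizing that it is exactly the tail hypothesis. I would order the elements of $I^{+}$ as $w_0, w_1, \dots, w_z$ by increasing index; since smaller index means a strictly larger set of admissible gaps, the admissible-gap sets are nested, so the only sets one must test are the upper sets $\{w_t, \dots, w_z\}$. For such a set the available gaps are those whose two endpoints both lie among the one-vertices of index greater than $c_t = \operatorname{ind}(w_t)$; if there are $m_t$ such one-vertices they contribute $m_t - 1$ consecutive gaps. Applying the hypothesis to the tail beginning at index $c_t$ gives $m_t = u_k > z_k = z - t + 1$, hence $m_t - 1 \ge z - t + 1 = |\{w_t, \dots, w_z\}|$, so Hall's condition holds and the matching exists. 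The base vertex $w_0$, being adjacent to every one-vertex, fits in any remaining gap, and the degenerate cases with $u \le 2$ (forced by $n \le 4$) are checked by hand. Necessity is routine once the component inequality is in place; the care in sufficiency lies in correctly translating the nested gap structure into Hall's condition and pinning the count to the tail inequality.
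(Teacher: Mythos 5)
The paper does not prove this lemma at all --- it is imported verbatim from Reilly and Scheinerman \cite{RTG} as a black box --- so there is no in-paper argument to compare against; what you have written is a self-contained proof, and it is correct. Your necessity direction is the standard toughness obstruction: deleting the $u_k$ one-vertices of the $k$-th tail isolates the $z_k$ zero-vertices of that tail (their neighborhoods lie entirely in the deleted set) and leaves the nonempty low-index remainder $R$ as at least one further component, so $c(G-T)\le |T|$ forces $u_k\ge z_k+1$; the $u_k=0$ case is correctly dispatched by disconnectedness. Your sufficiency direction is a clean insertion argument: the identification of the Hall bound $m_t-1\ge z-t+1$ with the tail inequality at $k=n-c_t$ checks out exactly (the digit at position $c_t$ is a zero, so the ones in that tail are precisely the $m_t$ one-vertices of index exceeding $c_t$, and the zeros are precisely $w_t,\dots,w_z$), the nestedness of the admissible-gap sets legitimately reduces Hall's condition to the upper sets, and you correctly reserve the wrap-around gap observation for the base vertex so that the count $u\ge z+1$ closes the $t=0$ case. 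The only places requiring care --- the degenerate two-vertex ``cycle'' when $u\le 2$, and the distinction between the $m_t-1$ linear gaps and the full $m_t$ gaps including wrap-around --- are both explicitly flagged and handled. One could likely give a shorter sufficiency proof by induction on $n$ (strip the last digit, which must be a one, and extend a Hamiltonian cycle or path of the smaller graph), but your Hall's-theorem construction has the advantage of making the combinatorial meaning of the tail condition completely transparent.
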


\begin{corollary}\label{HamCor} Let $G$ be a threshold graph with $n \geq 3$ vertices, and $\seq(G) = s_1 s_2 \dots s_{n-1}$.  Then $G$ is Hamiltonian if and only if $s_{n-1} = 1$ and $h(s_1 s_2 \dots s_{n-2}) = 0$.
\end{corollary}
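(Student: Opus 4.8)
The plan is to translate the Reilly--Scheinerman Hamiltonicity criterion --- that $u_k(\seq(G)) > z_k(\seq(G))$ for every $1 \le k \le n-1$ --- into the two-part condition on the creation sequence, purely by bookkeeping on tails. Write $s = \seq(G) = s_1 \dots s_{n-1}$ and let $t = s_1 \dots s_{n-2}$ be the sequence obtained by deleting the last digit. The single observation driving everything is that, once we know $s_{n-1} = 1$, the $k$-th tail of $s$ is exactly the $(k-1)$-th tail of $t$ with the digit $1$ appended; hence $u_k(s) = u_{k-1}(t) + 1$ and $z_k(s) = z_{k-1}(t)$ for all $1 \le k \le n-1$.

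First I would dispose of the case $k = 1$. The $1$st tail of $s$ is the single digit $s_{n-1}$, so $u_1(s) > z_1(s)$ holds precisely when $s_{n-1} = 1$. Thus the criterion already forces $s_{n-1} = 1$, and conversely this is the first of the two conditions we must produce. With $s_{n-1} = 1$ in hand, the tail identity above rewrites the inequality $u_k(s) > z_k(s)$ as $u_{k-1}(t) + 1 > z_{k-1}(t)$, that is --- since the counts are integers --- as $z_{k-1}(t) - u_{k-1}(t) \le 0$.

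For the forward direction, assuming $G$ Hamiltonian gives $s_{n-1} = 1$ from $k = 1$, and then $z_{k-1}(t) - u_{k-1}(t) \le 0$ for all $1 \le k \le n-1$; as $j = k-1$ ranges over all of $0 \le j \le n-2$, this says every tail of $t$ has at least as many ones as zeros, which is exactly $h(t) = 0$. For the converse, assuming $s_{n-1} = 1$ and $h(t) = 0$, I read the chain backwards: $h(t) = 0$ gives $z_j(t) - u_j(t) \le 0$ for every $0 \le j \le n-2$, and substituting $j = k-1$ together with $s_{n-1} = 1$ yields $u_k(s) - z_k(s) = 1 + (u_{k-1}(t) - z_{k-1}(t)) \ge 1 > 0$ for each $1 \le k \le n-1$, so the criterion applies and $G$ is Hamiltonian.

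The argument is essentially a change of index, so the only real hazard is the off-by-one bookkeeping between the tails of $s$ and the tails of $t$. The step I would watch most carefully is the boundary behavior: the $j = 0$ (empty) tail of $t$, where $u_0(t) - z_0(t) = 0$ recovers exactly the $k = 1$ instance, and the $j = n-2$ tail, which is all of $t$ and corresponds to $k = n-1$ using the full sequence $s$. Confirming that these extreme indices line up correctly --- so that the two quantifier ranges genuinely coincide --- is what makes the conditions equivalent rather than merely one-directional.
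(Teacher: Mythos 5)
Your derivation is correct: the tail identity $u_k(s) = u_{k-1}(t)+1$, $z_k(s) = z_{k-1}(t)$ under $s_{n-1}=1$, the $k=1$ case forcing $s_{n-1}=1$, and the index ranges $1 \le k \le n-1$ versus $0 \le j \le n-2$ all line up exactly as claimed. The paper states this corollary without proof, and your argument is precisely the bookkeeping translation of the Reilly--Scheinerman criterion that the paper intends the reader to supply, so there is nothing further to compare.
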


Here, we generalize to find the full distribution of $\psi(G)$.  We define, for a binary sequence $s = s_1 s_2 \dots s_k$, the function $r(s)$ by
$$ r(s) = \max \left( \{0\} \cup \{ i : s_i = 1 \} \right) $$
That is, $r(s)$ returns the index of the right-most one in $s$ in the event that such exists, and zero otherwise.  

\begin{proposition}\label{PsiSize} For a threshold graph $G$ with $s = \seq(G)$, if $s$ contains at least two ones, then
$$ \psi(G) = r(s) + 1 - h(s'),$$
where $s'$ is the subsequence of $s$ defined by $s' = s_1 s_2 \dots s_{r(s)-1}$.
\end{proposition}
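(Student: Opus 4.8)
The plan is to first discard the vertices that cannot lie on any cycle, and then reduce the computation of $\psi(G)$ to a purely combinatorial optimization over subsequences of $s'$.

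First I would strip the trailing zeros. Since $s_r$ is the right-most one, every vertex of index $i > r$ is a zero-vertex with no one-vertex of higher index, hence isolated, and so lies on no cycle. Therefore $\psi(G) = \psi(G')$, where $G' = \gamma(t)$ is the induced subgraph on the vertices $0, 1, \dots, r$ and $t = s_1 \cdots s_{r-1} 1 = s' 1$. It then remains to show that $\psi(G') = (r+1) - h(s')$, bearing in mind that $G'$ has $r+1$ vertices. Next I would characterize cycles through Corollary \ref{HamCor}. If $C$ is any cycle of $G'$, then the induced subgraph on $V(C)$ has $C$ as a Hamiltonian cycle, so by the induced-subgraph/subsequence correspondence established above its creation sequence $w$ is a subsequence of $t$ and, by Corollary \ref{HamCor}, has the form $w = w' 1$ with $h(w') = 0$. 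Conversely, every such subsequence $w$ determines an induced subgraph $\gamma(w)$ that is Hamiltonian, whose Hamiltonian cycle is a cycle of $G'$ on $|w| + 1$ vertices. Hence $\psi(G') = \max\{\, |w| + 1 : w \text{ a subsequence of } t,\ w = w'1,\ h(w') = 0 \,\}$.

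To maximize $|w| = |w'| + 1$, I would observe that the terminal one of $w$ may be taken to be $s_r$ itself: using the top one-vertex leaves the longest prefix $s'$ available for $w'$, and the maximal length of an $h=0$ subsequence of a prefix can only grow as the prefix lengthens (a subsequence of a shorter prefix is a subsequence of a longer one). Writing $M$ for the maximum length of a subsequence of $s'$ with $h=0$, this gives $\psi(G') = M + 2$. The crux is then the combinatorial identity $M = |s'| - h(s') = (r-1) - h(s')$, i.e.\ that the fewest digits one must delete from $s'$ to force $h=0$ is exactly $h(s')$. For the lower bound on $M$, deleting the $h(s')$ right-most zeros of $s'$ yields a sequence with $h=0$ (the same device used in the matching-number argument), so $M \ge |s'| - h(s')$. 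For the upper bound, fix a tail of $s'$ of length $m$ on which $z_k(s') - u_k(s') = h(s')$ is attained, say with $Z$ zeros and $U$ ones, $Z - U = h(s')$; the surviving digits of this tail form a tail of any subsequence $w'$, so $h(w') = 0$ forces those survivors to have at least as many ones as zeros. Deleting $a$ ones and $b$ zeros from this tail then requires $Z - b \le U - a$, that is $b \ge h(s') + a \ge h(s')$, so at least $h(s')$ deletions fall inside this single tail; hence $|w'| \le |s'| - h(s')$ and $M \le |s'| - h(s')$. Combining the two bounds gives $\psi(G') = (r+1) - h(s')$.

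I expect the main obstacle to be the upper bound rather than the lower bound. A direct ``bottleneck'' count---noting that the zero-vertices of index $\ge r-m$ can meet only the $U+1$ one-vertices of index $\ge r-m$, so at most $U+1$ of them lie on a cycle---yields only that $h(s') - 1$ vertices must be omitted, one short of the claim, because in a cycle each admitted independent vertex merely needs two \emph{distinct} clique neighbors. Routing the upper bound instead through the Hamiltonicity corollary and the subsequence identity above sidesteps this off-by-one cleanly. Some care is also needed at the boundary: the hypothesis that $s$ contains at least two ones guarantees that $s'$ contains a one, whence $M \ge 1$ and every candidate graph $\gamma(w)$ has at least three vertices, which is precisely the regime in which Corollary \ref{HamCor} applies.
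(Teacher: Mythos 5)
Your proof is correct and follows essentially the same route as the paper's: discard the isolated vertices of index greater than $r(s)$, get the upper bound from the critical tail of $s'$ together with Corollary \ref{HamCor} applied to the subgraph induced by a cycle, and get the lower bound by deleting the $h(s')$ right-most zeros of $s'$ to produce a Hamiltonian induced subgraph $\gamma(s''1)$. Your reformulation via the longest subsequence $w'$ of $s'$ with $h(w')=0$ is just a cleaner packaging of the paper's excluded-vertex count, and your closing remarks (the off-by-one in the naive bottleneck bound, the $n\ge 3$ boundary for the corollary) are accurate but do not change the substance.
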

\begin{proof}

First, note that this formulation, like that for $\nu(G)$, can be expressed in terms of excluded vertices $n - \psi(G)$.  To find the minimum of the number of vertices skipped by each cycle, we begin by excluding the isolated vertices, of which there are exactly $(n-1-r(s))$.  

As for the non-trivial connected component, which corresponds to the sequence $s'1$, let $m$ be a maximizing index for $h(s')$, so that $h(s') = z_m(s') - u_m(s')$.  Then no cycle can contain more than $2 u_m(s')$ of the vertices corresponding to the last $m$ digits.  For if some cycle $Y$ were to contain $u_m(s') +1$ of the zero-vertices, then the threshold subgraph $H_Y \subseteq G$ induced by the vertices of $Y$ would have a Hamiltonian cycle but a creation sequence where some tail contained as many zeros as ones, contradicting Corollary \ref{HamCor}.  So at least $h(s')$ of the zero-vertices in the connected component must also be excluded, and thus
$$ n - \psi(G) \geq (n-1-r(s)) + h(s') $$

Let us define another binary sequence $s''$ by removing the right-most $h(s')$ zeros from $s'$; then $h(s'') = 0$ and $|s''| = r(s)-1-h(s')$.  Therefore $\gamma(s''1)$, an induced subgraph of $G$, is a threshold graph on $r(s) + 1 -h(s')$ vertices that contains a Hamiltonian cycle.  Thus, $\psi(G) \geq r(s) + 1 - h(s')$.
\end{proof}

\begin{theorem} For a random threshold graph $G$ on $n$ vertices,
$$ P( \psi(G) = k ) = \left( \frac{1}{2} \right)^{n-1} \left[ \binom{n-1}{ \left\lfloor \frac{k}{2} \right\rfloor } - \binom{k-2}{ \left\lfloor \frac{k}{2} \right\rfloor } \right] $$
\end{theorem}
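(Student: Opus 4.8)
The plan is to invoke uniformity to turn the probability into a count of creation sequences, express the event $\{\psi(G)=k\}$ through Proposition~\ref{PsiSize}, and then collapse the resulting sum of binomial coefficients with the hockey-stick identity. By the equivalence of the two random models, $P(\psi(G)=k)$ equals $(1/2)^{n-1}$ times the number of binary strings $s$ of length $n-1$ with $\psi(\gamma(s))=k$. Since every cycle has length at least three, I would take $k \geq 3$; for such $k$ the relation $\psi(\gamma(s))=k$ forces $s$ to contain at least two ones (a string with a single one yields a star, for which $\psi=2$), so Proposition~\ref{PsiSize} applies to every sequence being counted.

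Next I would stratify by the location of the right-most one. Writing $r = r(s)$, each admissible $s$ has the shape $s = s'\,1\,0^{\,n-1-r}$ with $|s'| = r-1$, and Proposition~\ref{PsiSize} turns the condition $\psi(\gamma(s)) = k$ into $h(s') = r + 1 - k$. Hence the count equals
$$ \sum_{r=k-1}^{\,n-1} \#\{\, s' : |s'| = r-1,\ h(s') = r+1-k \,\}, $$
where the lower limit $r=k-1$ comes from the nonnegativity of $h$ and the upper limit from $|s|=n-1$.

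I would then substitute the count of fixed-$h$ strings coming from the earlier proposition on the distribution of $h$: there are $\binom{\ell}{\lfloor (\ell+1+j)/2 \rfloor}$ strings of length $\ell$ with $h=j$. Setting $\ell = r-1$ and $j = r+1-k$ and reindexing by $j = 0,1,\dots,n-k$, each summand becomes $\binom{k+j-2}{\,j + \lfloor (k-1)/2 \rfloor\,}$. The crucial simplification is to flip this coefficient via $\binom{a}{b} = \binom{a}{a-b}$ and check that the complementary lower index collapses to the $j$-independent value $\lfloor k/2 \rfloor - 1$; the whole expression then reads $\sum_{m=k-2}^{\,n-2} \binom{m}{\lfloor k/2 \rfloor - 1}$, and the hockey-stick identity yields $\binom{n-1}{\lfloor k/2 \rfloor} - \binom{k-2}{\lfloor k/2 \rfloor}$, as desired.

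The main obstacle I anticipate is precisely this floor-function bookkeeping: one must verify, separately for even and odd $k$, that $k-2-\lfloor (k-1)/2 \rfloor = \lfloor k/2 \rfloor - 1$, so that the summand's lower index is genuinely constant in $j$ and the hockey-stick identity applies cleanly. I would also sanity-check the boundary data, namely the minimal stratum $r=k-1$ (where $h(s')=0$ and $|s'|=k-2$) and agreement with the Hamiltonian count of Corollary~\ref{HamCor} at $k=n$, to confirm that the index ranges are exactly right.
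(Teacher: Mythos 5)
Your proposal is correct and follows essentially the same route as the paper: stratify by the position of the right-most one, apply Proposition~\ref{PsiSize} to reduce to the condition $h(s') = r+1-k$, invoke the distribution of $h$, and collapse the sum via the hockey-stick identity (the paper phrases the stratification probabilistically via independence of $\{r(s)=j\}$ and the prefix, but this is the same computation). The only quibble is the parenthetical claim that a star has $\psi = 2$ --- a star is acyclic --- but this does not affect the argument, since all that matters is that $\psi \neq k$ for $k \geq 3$ when $s$ has fewer than two ones.
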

\begin{proof}

By Proposition \ref{PsiSize}, in order for $\psi(G)$ to equal $k$, we require that
$$ r(s) - k + 1 = h(s_1 s_2 \dots s_{r(s)-1}),$$
where $s = \seq(G)$.  So for $3 \leq k \leq n$, 
\begin{eqnarray*}
P( \psi(G) = k) & = & \sum_{ j = 1 }^{ n-1 } P( r(s) = j, \psi(G) = k ) \\
& = & \sum_{ j = 1 }^{ n-1 } P( r(s) = j, r(s) - k + 1 = h(s_1 s_2 \dots s_{r(s)-1}) ) \\
& = & \sum_{ j = k-1 }^{ n-1 } P( r(s) = j, j - k + 1 = h(s_1 s_2 \dots s_{j-1}) ) 
\end{eqnarray*}

Note that the two intersecting events are independent: the first depends only on the location of the right-most one in the sequence, whereas the second concerns all of the preceeding digits, and there is no overlap.  And since each individual digit is chosen independently, the distribution of $s_1 s_2 \dots s_{r(s)-1}$, conditioned on $r(s)$, is uniform over all binary sequences of length $r(s)-1$:

\begin{eqnarray*}
P( \psi(G) = k) & = & \sum_{ j = k-1 }^{ n-1 } P( r(s) = j) P( h(s_1 s_2 \dots s_{j-1} ) =  j - k + 1  ) \\
& = & \sum_{ j = k-1 }^{ n-1 } \left( \frac{1}{2} \right)^{n-j} \left( \frac{1}{2} \right)^{j-1} \binom{ j - 1 }{ 2j -k - \left\lfloor \frac{2j-k}{2} \right\rfloor } \\
& = & \left( \frac{1}{2} \right)^{n-1} \sum_{ j = k-1 }^{ n-1 }  \binom{ j - 1 }{ j - \left\lfloor \frac{k}{2} \right\rfloor } \\
& = & \left( \frac{1}{2} \right)^{n-1} \left[ \binom{n-1}{ \left\lfloor \frac{k}{2} \right\rfloor } - \binom{k-2}{ \left\lfloor \frac{k}{2} \right\rfloor } \right] 
\end{eqnarray*}
\end{proof}

\section{$k$-Core}

The $k$-core of a graph $G$ is the maximum induced subgraph $H \subseteq G$ such that all vertices of $H$ have degree at least $k$, formed by iteratively deleting all vertices with degree less than $k$.  The \emph{degeneracy} of $G$ is the largest $k$ such that the $k$-core of $G$ is non-empty.  An equivalent formulation for the degeneracy is the maximum, over all induced subgraphs $H \subseteq G$, of the minimum degree of a vertex in $H$.  That is,
$$ degen(G) = \max_{H \subseteq G} \min_{v \in V(H)} \deg(v) $$

These two concepts allow us to examine the density of our randomly generated graphs:
\begin{proposition} For a threshold graph $G$, $degen(G) \geq d$ if and only if $K_{d+1} \subseteq G$.
\end{proposition}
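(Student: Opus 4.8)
The plan is to show that $degen(G)$ equals exactly the number of one-vertices of $G$, after which the claim follows immediately from Lemma~\ref{ThreshClique}. Write $m$ for the number of one-vertices in $\seq(G)$; then Lemma~\ref{ThreshClique} states that the maximum clique of $G$ has size $m+1$, so $K_{d+1} \subseteq G$ holds precisely when $m \geq d$. It therefore suffices to prove $degen(G) = m$.

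For the lower bound, I would invoke Lemma~\ref{ThreshClique} to obtain $K_{m+1} \subseteq G$ (the $m$ one-vertices together with the base vertex form a clique). Taking this copy of $K_{m+1}$ as the witnessing induced subgraph $H$ in the formula $degen(G) = \max_{H} \min_{v} \deg(v)$, every vertex has degree $m$, so $degen(G) \geq m$. This half is in fact valid for an arbitrary graph, and gives the ($\Leftarrow$) direction of the proposition directly.

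The real work is in the upper bound $degen(G) \leq m$, and this is where the threshold structure is essential (for a general graph, high degeneracy need not force a large clique; a $d$-regular triangle-free graph is a counterexample). The key observation is that any induced subgraph $H \subseteq G$ is again a threshold graph whose creation sequence is a subsequence of $\seq(G)$, so $H$ has at most $m$ one-vertices. I would then exhibit a single low-degree vertex in every such $H$: the base vertex of $H$ is adjacent to precisely the one-vertices of $H$ (a lower-index vertex is joined to a higher-index one exactly when the latter has digit $1$), so its degree equals the number of one-vertices of $H$, which is at most $m$. Hence $\min_{v \in V(H)} \deg(v) \leq m$ for every induced subgraph $H$, and taking the maximum over $H$ yields $degen(G) \leq m$.

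The main obstacle is the bookkeeping in this last step: one must check that passing to the subsequence $\seq(H)$ can only decrease the number of ones, in particular that ``promoting'' a former one-vertex of $G$ to the base vertex of $H$ (which then carries no digit) does not increase the one-count, and that the base vertex's degree is exactly this one-count rather than merely bounded by it. Once $degen(G) = m$ is established, combining it with Lemma~\ref{ThreshClique} gives $degen(G) \geq d \iff m \geq d \iff K_{d+1} \subseteq G$, completing the proof.
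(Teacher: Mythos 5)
Your proof is correct, and it takes a somewhat different route from the paper's. The forward direction (a copy of $K_{d+1}$ witnesses minimum degree $d$ in the max--min formula) is identical in both. For the converse, the paper works directly with the $d$-core: a non-empty $d$-core gives a vertex set $V'$ in which every vertex has $d$ neighbors, and since zero-vertices (and the base vertex) are adjacent only to one-vertices, $V'$ must contain at least $d$ one-vertices, hence a $(d+1)$-clique. You instead prove the sharper statement $degen(G) = m$, where $m$ is the number of one-vertices, by exhibiting in \emph{every} induced subgraph $H$ a concrete low-degree vertex --- the base vertex of $H$, whose degree is exactly the one-count of $\seq(H)$, which is at most $m$ because $\seq(H)$ is a subsequence of $\seq(G)$ by the paper's first proposition. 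Both arguments ultimately rest on the same structural fact (non-one-vertices are adjacent only to one-vertices), but your version buys the exact formula $degen(G) = m$ as part of the proof, which is precisely the Corollary the paper states immediately afterward; the paper's order of deduction is reversed, deriving that corollary from the proposition. Your worry about ``promoting'' a one-vertex of $G$ to the base vertex of $H$ is already handled by the subsequence proposition: whatever relabeling occurs, $\seq(H)$ is a subsequence of $\seq(G)$ and so has at most $m$ ones, which is all the upper bound needs.
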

\begin{proof}

First, assume that $G$ contains a subgraph isomorphic to $K_{d+1}$.  Then letting $H$ denote said subgraph, we see that every vertex has degree exactly $d$, and thus the degeneracy is at least $d$, by the second definition given above.

Next, assume that $G$ has degeneracy greater than or equal to $d$.  Then $G$ has a non-empty $d$-core, and therefore there exists a subset $V' \subseteq V(G)$ in which every vertex of $V'$ is adjacent to at least $d$ other members of $V'$.  Then $|V'| \geq d+1$, and since zero-vertices are adjacent only to one-vertices, $V'$ must contain at least $d$ of $G$'s one-vertices.  So $G$ contains at least $d$ one-vertices, and thus a clique of size $d+1$, as all one-vertices are adjacent to each other, as well as the base vertex.
\end{proof}

\begin{corollary} For a threshold graph $G$, $degen(G) = d$ if and only if $\seq(G)$ contains exactly $d$ ones.
\end{corollary}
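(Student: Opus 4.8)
The plan is to deduce this directly from the preceding Proposition together with Lemma \ref{ThreshClique}; no new construction is needed, since these two results already determine the degeneracy completely. First I would let $m$ denote the number of ones in $\seq(G)$, equivalently the number of one-vertices of $G$. By Lemma \ref{ThreshClique}, the largest clique in $G$ has exactly $m+1$ vertices, so $K_{d+1} \subseteq G$ holds precisely when $d+1 \leq m+1$, that is, when $d \leq m$.

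Combining this observation with the preceding Proposition, which asserts that $degen(G) \geq d$ if and only if $K_{d+1} \subseteq G$, I would conclude that $degen(G) \geq d$ exactly when $d \leq m$. Setting $d = m$ then gives $degen(G) \geq m$, while setting $d = m+1$ gives $degen(G) < m+1$; together these force $degen(G) = m$. Hence $degen(G) = d$ if and only if $d = m$, which is exactly the statement that $\seq(G)$ contains precisely $d$ ones.

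I expect essentially no obstacle here, as the corollary is a bookkeeping consequence of the two cited results. The only point deserving care is the equivalence ``$K_{d+1} \subseteq G$ if and only if $d \leq m$'': this depends on reading Lemma \ref{ThreshClique} as giving the \emph{maximum} clique size, so that $G$ contains a clique on $d+1$ vertices exactly when that maximum is at least $d+1$. Once that reading is made explicit, the chain of equivalences closes immediately and the corollary follows.
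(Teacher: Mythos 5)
Your argument is correct and is exactly the reasoning the paper intends: the corollary is stated without proof as an immediate consequence of the preceding Proposition combined with Lemma \ref{ThreshClique}, and your chain of equivalences ($K_{d+1} \subseteq G$ iff $d \leq m$, hence $degen(G) \geq d$ iff $d \leq m$, hence $degen(G) = m$) fills in that gap in the natural way. No issues.
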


\begin{corollary}\label{degenDist} For a random threshold graph $G$ with $n$ vertices, 
$$ P( degen(G) = d ) = \left( \frac{1}{2} \right)^{n-1} \binom{n-1}{d} $$
\end{corollary}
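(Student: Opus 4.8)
The plan is to combine the preceding corollary with Proposition~2.2 (the distribution of the number of ones, restated here for degeneracy). The preceding corollary establishes that $degen(G) = d$ if and only if $\seq(G)$ contains exactly $d$ ones. Since $\seq(G)$ is a binary string of length $n-1$, the event $\{degen(G) = d\}$ corresponds precisely to the set of creation sequences with exactly $d$ ones among their $n-1$ digits.

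First I would invoke the uniformity of the distribution on creation sequences: because each of the $2^{n-1}$ binary strings of length $n-1$ is equally likely, the probability of any event is simply the number of qualifying sequences divided by $2^{n-1}$. This reduces the problem to a pure counting question.

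Next I would count the sequences with exactly $d$ ones. The number of length-$(n-1)$ binary strings containing exactly $d$ ones is $\binom{n-1}{d}$, since such a string is determined by choosing which $d$ of the $n-1$ positions hold a one. Multiplying by the uniform weight $\left( \frac{1}{2} \right)^{n-1}$ then yields
$$ P( degen(G) = d ) = \left( \frac{1}{2} \right)^{n-1} \binom{n-1}{d}, $$
as claimed.

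There is essentially no obstacle here: the real work was already done in the preceding corollary, which translated the graph-theoretic condition into a clean statement about the count of ones. Once that equivalence is in hand, the result is an immediate consequence of the uniformity of the distribution together with the elementary count of binary strings by their number of ones. I would keep the proof to a single short paragraph, simply citing the corollary and performing the one-line count.
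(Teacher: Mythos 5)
Your argument is correct and is exactly the (omitted) justification the paper intends: the preceding corollary reduces the event to ``exactly $d$ ones in a uniformly random binary string of length $n-1$,'' and the count $\binom{n-1}{d}$ times the uniform weight $\left(\frac{1}{2}\right)^{n-1}$ finishes it. One small quibble: the distribution of the number of ones is not Proposition~2.2 of the paper (that proposition concerns the statistic $h$), but your proof does not actually rely on it, so nothing is affected.
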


\begin{proposition} For a threshold graph $G$ such that $s = \seq(G)$ contains at least $k$ ones, a vertex $v \in V(G)$ is in the $k$-core of $G$ if and only if $\deg(v) \geq k$.
\end{proposition}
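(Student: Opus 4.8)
The plan is to prove that the $k$-core is exactly the set $S = \{ v \in V(G) : \deg(v) \geq k \}$, which gives the stated equivalence. One inclusion is immediate: the $k$-core is an induced subgraph in which every vertex has degree at least $k$, and passing to an induced subgraph can only lower degrees, so any vertex of the $k$-core already satisfies $\deg(v) \geq k$ in $G$. All of the work therefore lies in the converse, that every vertex $v$ with $\deg(v) \geq k$ survives the iterative deletion.

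Rather than follow the deletions step by step, I would exhibit a single induced subgraph of minimum degree at least $k$ that contains all of $S$; since the $k$-core is the maximum such subgraph, this forces $S$ into the $k$-core. The natural candidate is the induced subgraph $G[S]$ itself, so the crux is the claim that $G[S]$ has minimum degree at least $k$. To verify this I would split $S$ by vertex type. Let $m$ be the number of one-vertices, so $m \geq k$ by hypothesis. By the structure described in Section 2, the base vertex together with the $m$ one-vertices forms a clique of size $m+1$; each of these vertices thus has degree at least $m \geq k$ and so lies in $S$, and within $G[S]$ each still sees the other $m$ members of the clique, so its degree there is at least $m \geq k$. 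For a zero-vertex $v \in S$, the key observation is that zero-vertices are adjacent only to one-vertices, all of which lie in $S$; hence no neighbor of $v$ is removed in passing to $G[S]$, and $\deg_{G[S]}(v) = \deg(v) \geq k$. Since every vertex of $S$ is one of these two types, the claim follows, and combining the two inclusions yields the proposition.

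The main obstacle is conceptual rather than computational: one must rule out a cascade in which deleting low-degree vertices drops some surviving vertex below degree $k$. The split structure of threshold graphs is precisely what prevents this — the clique of one-vertices and the base vertex is self-supporting and automatically contained in $S$, while every zero-vertex has all of its neighbors among the one-vertices, which are never deleted. The hypothesis that $s$ contains at least $k$ ones is essential to this: it is what guarantees the clique has size at least $k+1$, so that its vertices genuinely belong to $S$. Without it a high-degree vertex can still fail to lie in the $k$-core — the dominating vertex of a star $K_{1,n-1}$ has large degree, yet the $2$-core is empty since its leaves are deleted first — which is exactly why the hypothesis appears in the statement.
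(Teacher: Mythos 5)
Your proof is correct and rests on the same two structural facts the paper uses: the one-vertices together with the base vertex form a self-supporting clique of size at least $k+1$, and every zero-vertex has all of its neighbors among the one-vertices. The only difference is packaging — you exhibit $G[S]$ as a single induced subgraph of minimum degree at least $k$ and invoke maximality once, whereas the paper argues vertex by vertex — so this is essentially the paper's argument.
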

\begin{proof}

Since $G$ contains at least $k$ one-vertices, $G$ contains a non-empty $k$-core.  If a vertex lies in the $k$-core of $G$, then by definition it has a degree of at least $k$ in that induced subgraph, and therefore a degree of at least $k$ in $G$.

Next, consider some vertex $v$ such that $\deg(v) \geq k$.  If $v$ is a one-vertex, then $v$ is a vertex of some induced $K_{k+1}$, and thus part of the $k$-core.  On the other hand, if $v$ is a base vertex or zero-vertex, then $v$ is adjacent to $k$ one-vertices in the $k$-core, and thus part of the $k$-core as well.
\end{proof}

Because of this, if a non-empty $k$-core exists, then only one round of pruning occurs.  Furthermore, the pruned vertices are all zero-vertices of degree less than $k$.

\begin{theorem} For a random threshold graph $G$ on $n$ vertices,
$$ P( |k-core(G)| = j ) = \left\{ \begin{array}{cc} 
\sum_{i=0}^{k-1} \left( \frac{1}{2} \right)^{n-1} \binom{n-1}{i} & j = 0 \\
\left( \frac{1}{2} \right)^{n+k-j} \binom{n+k-j-1}{k-1} & j \geq k+1 \end{array} \right. $$
\end{theorem}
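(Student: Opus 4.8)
The plan is to split on whether the $k$-core is empty. For the case $j = 0$, I would first recall that the $k$-core is non-empty exactly when $degen(G) \geq k$, which by the earlier corollary happens if and only if $s = \seq(G)$ contains at least $k$ ones. Thus $|k\text{-core}(G)| = 0$ precisely when $s$ has at most $k-1$ ones, and summing the counts $\binom{n-1}{i}$ of such sequences over $i = 0, 1, \dots, k-1$, each weighted by $(1/2)^{n-1}$, gives the first branch directly.

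For $j \geq k+1$ I would assume $s$ has at least $k$ ones and let $p$ be the index of the $k$-th one counted from the right, so that $s_p = 1$ and exactly $k-1$ ones occur among $s_{p+1}, \dots, s_{n-1}$. The key step is to determine the $k$-core exactly. By the preceding proposition a vertex lies in the $k$-core if and only if its degree is at least $k$. The base vertex and every one-vertex belong to the clique on all one-vertices together with the base vertex, so each has degree at least $k$ and survives. A zero-vertex of index $i$ is adjacent only to the one-vertices of higher index, so its degree is the number of ones among $s_{i+1}, \dots, s_{n-1}$; this is at least $k$ when $i < p$ and at most $k-1$ when $i > p$. Hence the $k$-core is exactly the $p+1$ vertices of index at most $p$ together with the $k-1$ one-vertices of index exceeding $p$, so that $|k\text{-core}(G)| = p + k$.

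It then follows that $|k\text{-core}(G)| = j$ is equivalent to $p = j - k$, and I would finish by counting the sequences whose $k$-th one from the right lies at index $p = j - k$. Each such sequence fixes $s_p = 1$, places exactly $k-1$ ones among the $n-1-p$ positions to the right of $p$, and leaves the $p-1$ positions to the left unconstrained (the $k$ ones at indices $\geq p$ already force at least $k$ ones overall). This produces $2^{p-1}\binom{n-1-p}{k-1}$ sequences; multiplying by $(1/2)^{n-1}$ and substituting $p = j-k$ collapses to $(1/2)^{n+k-j}\binom{n+k-j-1}{k-1}$. This simultaneously shows $P(|k\text{-core}(G)| = j) = 0$ for $1 \leq j \leq k$, since $p \geq 1$ forces $j \geq k+1$.

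The hard part will be the middle paragraph: converting the degree-based membership criterion into the clean statement that the surviving zero-vertices are exactly those lying to the left of the $k$-th one from the right, and thereby identifying $|k\text{-core}(G)|$ with $p + k$. Once this characterization is established, the remaining step is a routine count of binary sequences.
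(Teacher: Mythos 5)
Your proposal is correct and follows essentially the same route as the paper: empty case via the degeneracy distribution, and for $j \geq k+1$ the observation that the pruned vertices are exactly the zero-vertices lying to the right of the $k$-th one from the right, which pins that one at position $p = j-k$ and yields the count $2^{j-k-1}\binom{n+k-j-1}{k-1}$. Your middle paragraph just spells out the degree computation for zero-vertices more explicitly than the paper does, but the argument is the same.
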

\begin{proof}

For the $k$-core of $G$ to be empty, the degeneracy of $G$ can be at most $k-1$.  So by Corollary \ref{degenDist} the probability of having $|k\textnormal{-core}(G)| = 0$ is
$$ \sum_{i=0}^{k-1} P( degen(G) = i ) = \sum_{i=0}^{k-1} \left( \frac{1}{2} \right)^{n-1} \binom{n-1}{i} $$

In the non-empty cases, the event of the $k$-core having exactly $j$ vertices is the same as exactly $n-j$ vertices being removed by the pruning process.  These discards, being zero-vertices of low degree, can have at most $k-1$ one-vertices of higher index.  So $\seq(G)$ has $n-j$ zeros lying to the right of the $k$-th one from the right.

To summarize, the $k$-core of $G$ has exactly $j > 0$ vertices if and only if the right-most $(n+k-j-1)$ digits of $\seq(G)$ contain exactly $k-1$ ones and $n-j$ zeros, and the $(n+k-j)$-th digit from the right is a one.  As there are no restrictions on the first $j-k-1$ digits, there are $2^{j-k-1} \binom{n+k-j-1}{k-1}$ such sequences.  Therefore,
$$ P(|k\textnormal{-core}(G)|=j) = \frac{2^{j-k-1}}{2^{n-1}} \binom{n+k-j-1}{k-1} $$
\end{proof}

\clearpage 


\end{document}